\documentclass[12pt]{amsart}
\usepackage{graphicx}
\usepackage{amssymb}
\usepackage{amsmath}
\usepackage{amsthm,amsfonts,bbm}
\usepackage{amscd}
\usepackage{geometry}
\usepackage{epsfig,epstopdf}
\usepackage{mathrsfs}
\usepackage[backref]{hyperref}
\usepackage{color}
\usepackage{tikz}

\theoremstyle{plain}
\newtheorem{thm}{Theorem}[section]
\newtheorem{coro}[thm]{Corollary}
\newtheorem{lemma}[thm]{Lemma}
\theoremstyle{definition}

\theoremstyle{plain}
\newtheorem{rmk}[thm]{Remark}
\theoremstyle{definition}

\numberwithin{equation}{section}
\numberwithin{figure}{section}
\def \d{\delta}
\def \deg{\text{deg}}
\def \dn{\text{d}}
\def \down{\text{down}}
\def \ex{\text{ex}}

\def \la{\lambda}
\newcommand{\link}{\operatorname{link}}
\def\p{\partial}
\def \q{\mathfrak{q}}

\def \R{\mathbb{R}}
\def \B{{\sf{B}}}
\def \un{\text{u}}
\def \up{\text{up}}
\def \sp{\text{spex}}

\def \W{{\sf{W}}}
\def \WW{\mathcal{W}}
\usepackage{amsaddr}

\begin{document}
\title[Spectral radius of complexes]{Signless Laplacian spectral radius of simplicial complexes without $r$-dimensional wheels}
\author[H.-Z. Zhang and Y.-Z. Fan]{Huan-Zhi Zhang and Yi-Zheng Fan*}
\address{\small Center for Pure Mathematics, School of Mathematical Sciences, \\
 Anhui University, Hefei 230601, P. R. China}
\email{fanyz@ahu.edu.cn, zhang\_huanzhi@stu.ahu.edu.cn}
\thanks{*The corresponding author.
Supported by National Natural Science Foundation of China (No. 12331012).}
	
\subjclass[2000]{05E45, 05C65, 05C35, 55U05}
	
\keywords{Simplicial complex; Tur\'an problem; signless Laplacian; spectral radius; wheel}

\begin{abstract}
An $r$-dimensional wheel is defined as the join of an $(r-2)$-simplex and a cycle.
In this paper, we study the maximum signless Laplacian spectral radius of  $n$-vertex $r$-dimensional pure simplicial complexes that contain no $r$-dimensional wheels.
For sufficiently large $n$, we determine the extremal complexes that attain this maximum.
Our result generalizes the corresponding extremal results of signless Laplacian on graphs and provides a spectral anlogue of a theorem of S\'os, Erd\H{o}s and Brown on the maximum number of facets of simplicial complexes in the case $r=2$.
\end{abstract}
	
\maketitle

\section{introduction}
\subsection{Tur\'an problems}
The \emph{Tur\'an number} of a graph $F$, denoted by $\ex(n,F)$, is the maximum number of edges in an $n$-vertex graph that does not contain $F$ as a subgraph.
In 1941, Tur\'an \cite{Turan1941} proved that for any integer $r \geq 2$, the Tur\'an number of the complete graph $K_{r+1}$ on $r+1$ vertices is uniquely attained at the balanced complete $r$-partite graph $T_{r}(n)$ on $n$ vertices, which implies that  $\ex(n, K_{r+1}) = e(T_{r}(n))$, where $e(G)$ denotes the number of edges of a graph $G$.
This result initiated extensive research on Tur\'an numbers for various forbidden graphs.
A cornerstone of the field is the Erd\H{o}s-Stone-Simonovits theorem \cite{ES1966}, which states that for any graph $F$,
\[\ex(n, F) = \left(1 - \frac{1}{\chi(F) - 1} + o(1)\right) \binom{n}{2},\]
where $\chi(F)$ denotes the chromatic number of $F$.
When $F$ is bipartite, i.e., $\chi(F)=2$, the theorem only yields $\ex(n,F)=o(n^2)$.
Therefore, determining the exact value or order of magnitude of the Turán number for bipartite graphs is of great significance.
The study of such a case dates back to the work of  K\H{o}v\'ari, S\'os and Tur\'an \cite{KST1954} in 1954, who proved that for the complete bipartite graph $ K_{s,t}$ with $s \leq t$, $\ex(n, K_{s,t}) = O(n^{2-1/s}).$
A graph $G$ is said to be \emph{$(r+1)$-color-critical} if $\chi(G)=r+1$, and $\chi(G-e)=r$ for some edge $e$ of $G$, where $G-e$ denotes the subgraph of $G$ by removing the edge $e$.
For any $(r+1)$-color-critical graph $G$ on $n$ vertices, Simonovits \cite{Simon1968} proved that for sufficiently large $n$, $\ex(n, G) = e(T_{r}(n))$ and $T_{r}(n)$ is the unique extremal graph.
For further results on the Tur\'an number of graphs, see \cite{BN2010,E1964,Fur1996,FS2013,K2011}.

Let $K_n, C_n$ denote the complete graph and the cycle on $n$ vertices respectively, and let $G \vee H$ be the \emph{join} of two vertex-disjoint graphs $G$ and $H$, which is obtained from $G \cup H$ by adding all possible edges between the vertices of $G$ and the vertices of $H$.
The \emph{wheel graph} on $n$ vertices is defined as $W_n=K_1 \vee C_{n-1}$.
For even wheels, $W_{2k}$ is  $4$-color-critical, and hence, by Simonovits's result, for sufficiently large $n$, $\ex(n, W_{2k}) = e(T_{3}(n))$.
Dzido \cite{D2013} improved this result to $n \ge 6k - 10$ for $k \ge 3$.
For odd wheels, Dzido and Jastrz\c{e}bski \cite{DJ2018} determined the exact Tur\'an numbers for $W_5$ and $W_7$, and presented a lower bound for $W_{2k+1}$; 
Yuan \cite{Y2021} established an exact value of $\ex(n, W_{2k+1})$ for $k \geq 3$ and sufficiently large $n$.

The Tur\'an problem of graphs was naturally generalized to hypergraphs.
For an $r$-uniform hypergraph $F$, the Tur\'an number  $\ex_r(n, F)$ is the maximum number of edges in an $r$-uniform hypergraph on $n$ vertices that does not contain $F$ as a subhypergraph.
However, the Tur\'an problem for hypergraphs  presents considerable difficulty.
A prime example is the $3$-uniform hypergraph $\Delta_4^3$ on $4$ vertices with all possible $3$-subsets as edges.
Tur\'an \cite{Turan1941} posed a conjecture on the exact value of $\ex_3(n , \Delta_4^3)$, which is still unknown to this day.

In fact, we can view hypergraphs from the viewpoint of simplicial complexes and consider the homeomorphic Tur\'an problems for simplicial complexes.
For a family $\mathcal{F}$ of $r$-dimensional complexes, let $\ex(n,\mathcal{F})$ denote the maximum number of $r$-faces in an $r$-dimensional complex on $n$ vertices that does not contain any complex in $\mathcal{F}$ as a subcomplex.
Brown, Erd\H{o}s and S\'os \cite{SEB1973} proved
$ \ex(n, S^2)=\Theta(n^{\frac{5}{2}})$, where $S^r$ denotes an $r$-dimensional sphere.
More recently, Newman and Pavelka \cite{NP2024} provided a conditional lower bound and also an upper bound for $\ex(n,S^r)$.
The asymptotics for homeomorphs of fixed orientable surfaces in $2$-dimensional complexes were established by Kupavskii et al. \cite{KPTZ2022},
and those for homeomorphs of fixed non-orientable surfaces were established by Sankar \cite{San2024}.
In general, homeomorphic Tur\'an problems for arbitrary complexes have been investigated by Keevash et al. \cite{KLNS2021} and  Long et al. \cite{LNY2022}.
A systematic investigation of such extremal problems was recently initiated by Conlon, Piga and Sch\"ulke \cite{CPS2023} and was further studied by Axenovich et al. \cite{AGLP2025}.

\subsection{Spectral Tur\'an problems}
The spectral version of the Tur\'an problems, known as the spectral Tur\'an problems, asks for the maximum spectral radius of all $F$-free graphs and has received a lot of attention.
Usually, the spectral radius of a graph $G$ refers to  the spectral radius of the adjacency matrix $A(G)$ of $G$, denoted by $\la(G)$.
Let $\sp(n,F)$ denote the maximum spectral radius among all $n$-vertex graphs that do not contain $F$ as a subgraph.
In 2007, Nikiforov \cite{Niki2007} proved that $\sp(n,K_{r+1}) = \la(T_r(n))$,
and $T_{r}(n)$ is the unique extremal graph.
Subsequently, Nikiforov \cite{Niki2009} obtained a spectral analog of the Erd\H{o}s-Stone-Simonovits theorem.
The spectral Tur\'an problem is closely related to  the Tur\'an problem by the following connection: $\lambda(G) \ge {2e(G)}/{n}$ for a graph $G$ on $n$ vertices, which implies that $ \ex(n,G) \le {n \cdot \sp(n,G)}/{2}.$
A wealth of literature has been devoted to this field; see \cite{LB2023,LLF2022,Niki2011}.

Specifically, Zhao, Huang, and Lin \cite{ZHL2021} obtained the maximum spectral radius among all wheel-free graphs on $n$ vertices and determined the extremal graphs that have different structures depending on $n$ modulo $4$, together with an additional case of $n=7$.

Alternatively, people use the spectral radius of the signless Laplacian matrix to study the spectral Tur\'an problem.
Let $q(G)$ denote the spectral radius of the signless Laplacian matrix $Q(G)$ of $G$.
By the relation
$ q(G) \ge {4e(G)}/{n}$ for an $n$-vertex graph $G$,
spectral Tur\'an results with respect to the signless Laplacian spectral radius also yield results for Tur\'an problems.
For a wheel-free graph $G$ on $n \ge 4$ vertices,
Zhao, Huang, and Lin \cite{ZHL2021} proved that
\begin{equation}\label{Graph-sign}
q(G) \le q\bigl(K_2 \vee (n-2)K_1\bigr),
\end{equation}
with equality if and only if $G= K_2 \vee (n-2)K_1$, where $(n-2)K_1$ denotes the disjoint union of $(n-2)$ isolated vertices.
For further related research, refer to the survey \cite{Z2011}.

The spectral Tur\'an problems for simplicial complexes were initiated by Fan and She \cite{FS2025}.
Let $\q_{r-1}(K)$ denote the spectral radius of the $(r-1)$-up signless Laplacian operators of an $r$-dimensional pure simplicial complex $K$, referred to as the \emph{signless Laplacian spectral radius of $K$}.
Note that if $r=1$, then $K$ is a graph and $\q_0(K)$ is consistent with the definition $q(G)$ above.
Fan and She \cite{FS2025} obtained the maximum signless Laplacian spectral radius of $r$-dimensional pure simplicial complexes on $n$ vertices without $r$-dimensional holes and determined the extremal complexes.
In a subsequent paper, She, Fan, and Song \cite{SFS2026} established an asymptotic formula for the maximum signless Laplacian spectral radius of $r$-dimensional pure simplicial complexes on $n$ vertices with the second Betti number $t$ for $1 \le t \le n-3$ and determined the extremal complexes when $t$ equals $1$ or $2$.

Let $\Delta_{n+1}$ denote an $n$-dimensional simplex, which can also be considered a simplicial complex on $n+1$ vertices with all possible subsets as faces.
An \emph{$r$-dimensional wheel} on $n$ vertices, denoted by $\W_n^r$, is defined as the join of an $(r-2)$-simplex $\Delta_{r-1}$ with a cycle $C_{n-r+1}$, written as
$\W_n^r=\Delta_{r-1} \star C_{n-r+1}$.
Note that the join $\star$ is different from the join $\vee$ defined on graphs; see Section \ref{Sec-2} for details.
Let $\WW^r$ be the set of all $r$-dimensional wheels.
S\'os, Erd\H{o}s and Brown \cite{SEB1973} proved that
\begin{equation}\label{EBS}
 \ex(n, \WW^2)=\Theta( n^2 /3 ).
\end{equation}

In this paper, we will provide a spectral version of Erd\H{o}s-Brown-S\'os's result on wheel-free complexes.
Our main result is as follows, where an \emph{$r$-dimensional book} on $n$ vertices is denoted and defined by  $\B_n^{r}=\Delta_{r} \star (n-r)K_1$.

\begin{thm}\label{main}
 Let $K$ be a pure $r$-dimensional complex on $n$ vertices without $r$-dimensional wheels.
Then
\begin{equation}\label{upp-main}
\q_{r-1}(K) \le n.
\end{equation}
Furthermore, if $K$ is $r$-path connected, then, for sufficiently large $n$, the equality holds in \eqref{upp-main} if and only if $K \cong \B_n^{r}$.
\end{thm}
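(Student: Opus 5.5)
The plan is to bound $\q_{r-1}(K)$ by a row sum of an auxiliary matrix indexed by the $r$-faces, and then to use wheel-freeness locally, through links of $(r-2)$-faces.

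\textbf{Setting up the row-sum bound.} Write the $(r-1)$-up signless Laplacian as $Q=BB^{T}$. Here $B$ is the unsigned incidence matrix between $(r-1)$-faces and $r$-faces. Then $\q_{r-1}(K)$ equals the spectral radius of $B^{T}B$, which is indexed by the $r$-faces. The row of $B^{T}B$ at an $r$-face $\sigma$ sums to $\sum_{F\subset\sigma,\,|F|=r}\deg F$, where $\deg F$ is the number of $r$-faces containing $F$. Since the entries are nonnegative, the spectral radius is at most the maximum row sum. So it suffices to show
\[
\sum_{i=0}^{r}\deg F_i\le n \quad\text{for every $r$-face } \sigma=\{v_0,\dots,v_r\},
\]
where $F_i=\sigma\setminus\{v_i\}$.

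\textbf{Proving the local inequality.} I would first note a reformulation of wheel-freeness. For every $(r-2)$-face $\tau$, the link of $\tau$ is a graph. A cycle $C$ in that link would give a wheel $\Delta_{r-1}\star C\subseteq K$, so $K$ is wheel-free exactly when every such link is a forest.

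Next, write $\deg F_i=1+|U_i|$, where $U_i$ is the set of vertices $u\notin\sigma$ with $F_i\cup\{u\}\in K$. Suppose $u\in U_i\cap U_j$ with $i\ne j$, and set $\tau=\sigma\setminus\{v_i,v_j\}$. Then the link of $\tau$ contains the three edges $v_iv_j$, $v_ju$ and $v_iu$. This is a triangle, which is impossible. Hence the $U_i$ are pairwise disjoint, and
\[
\sum_i\deg F_i\le (r+1)+(n-r-1)=n.
\]
This proves \eqref{upp-main}.

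\textbf{The equality case.} Now assume $K$ is $r$-path connected and $\q_{r-1}(K)=n$. Then $B^{T}B$ is irreducible, so equality in the row-sum bound forces every row sum to equal $n$. Thus for every $r$-face $\sigma$, each vertex outside $\sigma$ extends exactly one boundary face of $\sigma$. The goal is to show that only one class $U_i$ is nonempty, for every $\sigma$; then $K=\B_n^r$ follows easily.

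Suppose instead that $u\in U_i$ and $w\in U_j$ with $i\ne j$. Apply the partition property to $\sigma'=F_i\cup\{u\}$. The vertex $w$ must extend some boundary face of $\sigma'$. It cannot extend $F_i$, because $w\notin U_i$. So it extends $G_k=\sigma\setminus\{v_i,v_k\}\cup\{u\}$ for some $k\ne i$.

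If $k=j$, take $\tau=\sigma\setminus\{v_i,v_j\}$. Its link contains the 4-cycle $v_iv_ju\,w$, which contradicts wheel-freeness. The case $k\ne i,j$ is the main obstacle. There I would iterate the partition property over the new $r$-faces $G_k\cup\{w\}$, and also bring in the vertex $v_i$ relative to $\sigma'$, to force a short cycle in the link of some $(r-2)$-face. If that pure local argument fails, I would use $n$ large as a fallback. By pigeonhole, some class has size at least $(n-r-1)/(r+1)$. I would use this large class together with the forest structure of the links (a forest on $m$ vertices has fewer than $m$ edges) to count and show that a second nonempty class produces a cycle. Once a single class remains for some $\sigma$, $r$-path connectivity propagates this to the whole of $K$, and it follows that $K\cong\B_n^r$.
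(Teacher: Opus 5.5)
Your upper bound and your reduction of the equality case to $1$-neighbor uniformity are correct and essentially match the paper. Bounding the spectral radius of $Q_r^{\down}=B^{T}B$ by its maximum row sum is the same estimate as evaluating the eigen-equation at a maximal Perron entry. Irreducibility forcing every row sum to equal $n$ is exactly the paper's statement that equality holds only for $1$-neighbor uniform complexes.

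The gap is the last step. The case $k\ne i,j$ is not a side obstacle: it is the whole content of the characterization, and you leave it unproved. Moreover, no purely local argument can settle it. For $r\ge 2$, take a cocycle complex $K_C$ on $n=r+3$ vertices whose cycle has length $5\le \ell\le r+3$. It is $r$-path connected, $\WW^r$-free and $1$-neighbor uniform, and at every face it has two nonempty classes in exactly the configuration $k\ne i,j$. Iterating the partition property there just walks around $C$ and never closes a short cycle in a link. So the hypothesis that $n$ is large must be used essentially, not as a fallback. Your proposal to ``count using the forest structure of the links'' does not say which link or which edges are counted, so as written it is a plan rather than an argument.

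The missing idea is to fix the stray vertex and let the other vertex range over the large class. Let $U_i$ be a largest class, so $|U_i|\ge (n-r-1)/(r+1)$, and suppose $w\in U_j$ with $j\ne i$. For each $u\in U_i$, apply your partition property to the face $F_i\cup\{u\}$ and the vertex $w$. This gives an index $k(u)\in\{0,\dots,r\}\setminus\{i\}$ with $(\sigma\setminus\{v_i,v_{k(u)}\})\cup\{u,w\}\in K$.

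Suppose $k(u_1)=k(u_2)=k$ for distinct $u_1,u_2\in U_i$, and put $\tau=\sigma\setminus\{v_i,v_k\}$. The link of $\tau$ contains $v_ku_1$ and $v_ku_2$, because $F_i\cup\{u_s\}=\tau\cup\{v_k,u_s\}$. It also contains $u_1w$ and $u_2w$, by the choice of $k$. These four vertices are distinct because the classes are disjoint, so they form the $4$-cycle $v_ku_1wu_2$, hence a wheel. In your forest language: the link of $\tau$ contains the star from $v_k$ to $U_i$, so it can contain at most one edge from $w$ to $U_i$.

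Hence $|U_i|\le r$, which is impossible once $n>(r+1)^2$. So for every $r$-face $\sigma$ exactly one class is nonempty, and $K$ contains the book on the corresponding $(r-1)$-face. Then $K\cong\B_n^r$ follows either from Perron--Frobenius monotonicity or from disjointness of classes plus $r$-path connectivity.

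The paper reaches the same $4$-cycle less directly. It shows that the $r$-path component of the subcomplex induced on $F_0\cup\{u,v\}$ is a cocycle complex, using its classification of the $(r+3)$-vertex case. It then pigeonholes over the possible cycle patterns through $F_0$ and reads off the cycle $w_2v_1uv_2$ in the link of $F_0\setminus\{w_1,w_2\}$. That route needs a considerably larger $n$ than the direct count above.
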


\begin{rmk} \rm
(1) Let $K$ be a simplicial complex, and let $K^{(1)}$ be the $1$-skeleton of $K$, namely the subcomplex of $K$ consisting of all faces of dimension not greater than $1$.
When $r=2$, then $(\W_n^2)^{(1)} = K_1 \vee C_{n-1}$, the wheel graph, and $(\B_n^2)^{(1)}=K_2 \vee (n-2)K_1$, the book graph.
So, Theorem \ref{main} yields a higher dimensional version of Zhao-Huang-Lin's result \eqref{Graph-sign}.

When $r=1$, then $\W_n^1=C_n$, a cycle, and $\B_n^1=K_1 \vee (n-1)K_1$, a star graph.
It is known for an acylic graph $K$ on $n$ vertices, $\q_0(K) \le \q_0(\B_n^1)$, with equality if and only if $K \cong \B_n^1$.
So, Theorem \ref{main} generalizes the above result on graphs to simplicial complexes.

(2) The requirement that $n$ be sufficiently large is necessary. For example, consider the $3$-dimensional complex $K$ on vertex set $\{0,1,2,3,4,5,6\}$ whose $3$-faces are $$0125,0136,0145,0156,0345,1235,1236,1346,2345,3456,$$
where a set $\{a,b,c,d\}$ is briefly denoted as $abcd$.
It is clear that $K$ contains no $3$-dimensional wheels, and is not a $3$-dimensional book.
However, the signless Laplacian spectral radius of $K$ satisfies \eqref{upp-main}, namely, $\q_2(K)=7$.

(3) Let $\sp(n,\WW^r)$ denote the maximum spectral radius of the $(r-1)$-up signless Laplacian of $n$-vertex $\WW^r$-free pure $r$-dimensional complexes.
Then, Theorem \ref{main} gives
$$ \sp(n, \WW^r)=n,$$
a spectral analogue of Erd\H{o}s-Brown-S\'os's result when $r=2$.
\end{rmk}
   	
\section{Preliminaries}\label{Sec-2}
In this section, we introduce some basic notions related to simplicial complexes, Laplacians, and signless Laplacians.
	
\subsection{ Simplicial complexes and Laplace operators}
A \emph{hypergraph} $H = (V(H), E(H))$ consists of a vertex set $V(H)$ and an edge set $E(H)$, where each edge of $E(H)$ is a subset of $V(H)$.
The hypergraph $H$ is called \emph{$r$-uniform}
if each edge has exactly $r$ vertices and is called \emph{linear} if any two edges intersect in at most one vertex.
Clearly, simple graphs are $2$-uniform linear hypergraphs.

Let $V$ be a finite set.
An \emph{abstract simplicial complex} (simply called a \emph{complex}) $K$ over $V$ is a collection of subsets of $V$ that is closed under inclusion (i.e., if $F \in K$ and $F' \subseteq F$, then $F' \in K$).
A \emph{facet} of $K$ is a face that is maximal under inclusion (no other face in $K$ contains it).
$K$ is \emph{pure} if all the facets have the same size.
Pure complexes can be viewed as uniform hypergraphs with all facets as hyperedges.

An \emph{$r$-face} or an \emph{$r$-simplex} of $K$ is an element of $K$ with cardinality $r+1$.
The \emph{dimension} of an $r$-face is $r$, and the dimension of $K$ is the maximum dimension of all faces of $K$,  denoted dim $K$.
The \emph{degree} of an $r$-face $F$ is the number of $(r+1)$-faces in $K$ that contain $F$, denoted by $\deg(F)$.
For two $(i+1)$-simplices of a complex $K$ sharing an $i$-face, we refer to them as \emph{$i$-down neighbors}.
For two $i$-simplices of $K$ which are faces of an $(i+1)$-simplex, we call them \emph{$(i+1)$-up neighbors}.
For a face $F$ of $K$, we denote by $N^{\dn}(F)$ the set of down neighbors and by $N^{\un}(F)$ the set of up neighbors of $F$, respectively.
The $\emph{link}$ of a face $F$ in $K$, denoted $\link(F,K)$, is defined as the subcomplex of $K$ consisting of all faces $F'$ that are disjoint from  $F$  and whose union $F \cup F'$ is also a face of $K$.
The \emph{join} of two complexes $K$ and $K'$ with disjoint vertex sets is denoted and defined by $K \star K'=\{F \cup F': F \in K, F' \in K'\}$, or equivalently, it is obtained from $K \cup K'$ by adding all unions of a face of $K$ and a face of $K'$.
	
Let $S_r(K)$ denote the set of all $r$-faces of $K$.
The $p$-skeleton of $K$, written $K^{(p)}$, is the subcomplex consisting of all faces of $K$ with dimensions smaller than or equal to $p$.
The 1-skeleton $K^{(1)}$ is a graph with vertex set $S_0(K)$ and edge set $S_1(K)$.
We say $K$ is \emph{connected} if the graph $K^{(1)}$ is connected.
An \emph{$r$-path} of length $m$ in $K$ is a sequence of $r$-faces $F_1, F_2, \cdots, F_m$ such that $F_i \cap F_j$ is an $(r-1)$-face if and only if $|j-i|=1$.
$K$ is called \emph{$r$-path connected} if any two $r$-faces are connected by an $r$-path.
	
A face $F$ is \emph{oriented} if we choose an ordering on its vertices and write $[F]$.
Two orderings of the vertices are said to determine the same orientation if there is an even permutation transforming one ordering into the other.
If the permutation is odd, then the orientations are opposite.
	
The \emph{$i$-th chain group} $C_i(K,\R)$ of $K$ with coefficients in $\R$ is a vector space over the field $\R$ with a basis $B_i(K,\R)=\{[F]\mid F\in S_i(K)\}$.
The \emph{cochain groups} $C^i(K,\R) $ are defined as duals of the chain groups, i.e., $C^i(K,\R) = \text{Hom}(C_i(K,\R),\R)$, which are generated by the dual basis consisting of $[F]^*$ for all $F \in S_i(K)$, where
\[
[F]^*([F]) = 1,\quad [F]^*([F']) = 0 \quad \text{for } F' \neq F.
\]
The functions $[F]^*$ are called \emph{elementary cochains}.
	
The \emph{$i$-th boundary map} $\partial_{i}:C_{i}(K,\mathbb{R})\to C_{i-1}(K,\mathbb{R})$ is defined by \[\partial_i[v_{0},\cdots,v_{i}]=\sum_{j=0}^{i}(-1)^{j}[v_{0},\cdots,\hat{v}_{j},\cdots ,v_{i}],\]
where $\hat{v}_{j}$ indicates that the vertex $v_j$ is omitted.
The \emph{$i$-th coboundary map} $\d_{i}:C^{i}(K,\mathbb{R})\to C^{i+1}(K,\mathbb{R})$ is defined by $\d_{i}f=f\p_{i+1}$, that is,
\[(\d_{i}f)[v_{0},\cdots,v_{i+1}]=\sum_{j=0}^{i+1}(-1)^{j}f([v_{0},\cdots,\hat{v}_{j},\cdots ,v_{i+1}]).\]
	
Endowing $C^i(K,\mathbb{R})$ and $C^{i+1}(K,\mathbb{R})$ with positive definite inner products, respectively, we obtain
the adjoint $\delta_i^* : C^{i+1}(K,\mathbb{R}) \to C^i(K,\mathbb{R})$ of $\delta_i$, which is defined by
\[
(\delta_i f_1, f_2)_{C^{i+1}} = (f_1, \delta_i^* f_2)_{C^i}
\]
for all $f_1 \in C^i(K,\mathbb{R})$, $f_2 \in C^{i+1}(K,\mathbb{R})$.
	
Three Laplace operators \cite{HJ2013a} on $C_i(K,\R)$ are defined as
\begin{equation}\label{Laplace}
L_i^{\up}(K)=\d_{i}^*\d_{i}, ~ L_i^{\down}(K)=\d_{i-1}\d_{i-1}^*, ~
L_i(K)=\d_{i}^*\d_{i} + \d_{i-1}\d_{i-1}^*
\end{equation}
which are referred to as the \emph{$i$-th up Laplace operator}, \emph{$i$-th down Laplace operator}, and \emph{$i$-th Laplace operator} of $K$, respectively.
Moreover, it follows directly from the definition that $L_i^{\up}(K),  L_i^{\down}(K)$ and $L_i(K)$ are self-adjoint, non-negative, and compact operators.
Eckmann \cite{E1944} proved the discrete version of Hodge theorem, namely, the kernel of $L_i(K)$ is isomorphic to the $i$-th homology group of $K$.

In the definition of Laplacian operators, for each $i$, if the inner product in $C^i(K,\R)$ is defined such that the elementary cochains are pairwise orthonormal, namely $([F]^*,[F]^*)=1$ and $([F]^*,[F']^*)=0$ if $F \ne F'$,
then the above three Laplacians are called \emph{combinatorial Laplacians}.
In addition, one can refer to \cite{SWF2025} for normalized Laplacians eigenvalues.

\subsection{Signless Laplace operators}
The signless Laplacian of a simplicial complex was introduced in \cite{KO2020}, and it was systematically studied with many interesting applications \cite{KL2014,Lub2014}.
It was further extended to signless $1$-Laplacian for investigating combinatorial property of a complex \cite{LZ2020}.
In addition, one can refer to \cite{FWW2025} for the relation between Laplacian eigenvalues and signless Laplacian eigenvalues.

Along with the line of the definition of Laplace operators, we can define the signless Laplace operators similarly by omitting the orientation.
Consider the vector space $D_i(K,\R)$ over $\R$ generated by all $i$-faces of $K$.
The \emph{$i$-th signless boundary map}
 $|\p_i|: D_i(K,\R) \to D_{i-1}(K,\R)$ is defined by
$$ |\p_i|\{v_0,\ldots,v_i\}=\sum_{j=0}^i \{v_0,\ldots,\hat{v}_j, \ldots, v_i\},$$
where $\hat{v}_j$ denotes the omission of the vertex $v_j$.
Let $D^i(K,\R)$ be the dual space of $D_i(K,\R)$ that is generated by dual basis $\{F^*: F \in S_i(K)\}$.
The \emph{$i$-th signless coboundary map} $|\d_i|: D^i(K,\R) \to D^{i+1}(K,\R)$ is defined by $ |\d_i| f =f |\p_{i+1}|$ for each $f \in D^i(K,\R)$.

For each $i$, endow $D^i(K,\R)$ with a positive inner product that makes
 the dual basis $\{F^*: F \in S_i(K)\}$ orthonormal. (One can define a general positive inner product over $D^i(K,\R)$.)
Let $|\d_i^*|:D^{i+1}(K,\R) \to D^i(K,\R)$ be the adjoint of $|\d_i|$,
which satisfies
$$\langle |\d_i| f, g \rangle_{D^{i+1}(K,\R)}=\langle f, |\d_i^*| g \rangle_{D^{i}(K,\R)}$$
for all $f \in D^i(K,\R)$ and $g \in D^{i+1}(K,\R)$.
The \emph{$i$-th up signless Laplace operator} and \emph{$i$-th down signless Laplace operator} of $K$ are respectively defined by
\begin{equation}\label{SLaplace}
Q_i^{\up}(K)=|\d_{i}^*| |\d_{i}|, ~ Q_i^{\down}(K)=|\d_{i-1}| |\d_{i-1}^*|.
\end{equation}

For each $f \in D^i(K,\R)$, by definition, we have the following expressions (or see \cite{FS2025}).
\begin{equation}\label{Q-up}
(Q_i^{\up}(K)f)(F)=\deg (F)  f(F) + \sum_{F' \in N^\un(F)} f(F');
\end{equation}
\begin{equation}\label{Q-dn}
(Q_i^{\down}(K)f)(F)=|F| f(F) + \sum_{F' \in N^\dn(F)} f(F').
\end{equation}

Since $Q_i^{\up}(K) = (Q_{i+1}^{\down}(K))^\top$ (in matrix forms),
 $Q_i^{\up}(K)$ and $(Q_{i+1}^{\down}(K))$ share the same nonzero eigenvalues.
 So, it suffices to deal with $Q_i^{\up}(K)$.

\section{Higher dimensional wheels}
\subsection{Cocycle complexs}
Let $V$ be a set with $(r+3)$ vertices, and let $C$ be a cycle whose vertex set is a subset of $V$.
An $r$-dimensional complex on the vertex set $V$ is called a \emph{cocycle complex associated with a cycle $C$}, denoted by $K_C$, if its $r$-faces consist of the complements of the edges of $C$ in the set $V$, namely,
$$ S_r(K_C)=\{V \setminus e: e \in E(C) \}.$$
A cocycle complex $K_C$ is in fact a complementary complex in the sense of the complement of faces; see \cite[Defintion 4.1]{DR2002}.
For convenience, denote $F_e:=V \setminus e$, the face associated with the edge $e$ of $C$.
For any two edges $e,e'\in E(C)$, $F_e$ and $F_{e'}$ are down neighbors if and only if $|e \cap e'|=1$, i.e., $e$ is incident to $e'$.
If the vertex set of $C$ is $V$ itself, then all $r$-faces $F_e$ with $e \in E(C)$ constitute a tight cycle.
Sudakov and Tomon \cite{ST2022} presented an upper bound for the extremal number of tight cycles.
In particular, if $r=2$, all $2$-faces of the tight cycle are a triangulation of M\"obius strip; see Fig. \ref{mobius}.
For more on tight cycles, refer to \cite{KPTZ2022,ST2022}.

\begin{figure}[htbp]
\centering
\includegraphics[scale=.8]{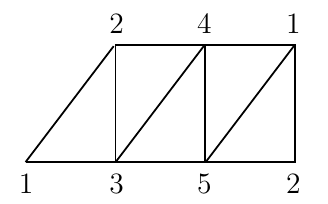}
\caption{\small A triangulation of M\"obius strip}\label{mobius}
\end{figure}

For brevity, denote by $C=v_1 v_2 \ldots v_n$ a cycle on vertices $v_1, \ldots,v_n$ with edges $\{v_i,v_{i+1}\}$ for $i=1,\ldots,n$, where $v_{n+1}=v_1$.

\begin{lemma}\label{cycle-com}
Let $K_C$ be an $r$-dimensional cocycle complex associated with a cycle $C$ of length $\ell$.
The following results hold.

{\rm (1)}  $K_C$ is $r$-path connected.

{\rm (2)} If $\ell=3$, then $K_C$ is the  book $\B_{r+3}^r$.

{\rm (3)} If $\ell=4$, then $K_C$ is the wheel $\W_{r+3}^r$.

{\rm (4)} If $ 5 \le \ell \le r+3$, then $K_C$ is $\WW^r$-free.
\end{lemma}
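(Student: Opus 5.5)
The whole argument runs through the complement correspondence $e \mapsto F_e = V\setminus e$, where $|V|=r+3$ and each $F_e$ has $r+1$ vertices. The basic fact is that the faces $F_e$ and $F_{e'}$ are down neighbors exactly when $e$ and $e'$ share a vertex. Indeed, $|F_e\cap F_{e'}| = r+3-|e\cup e'|$, and this equals $r$ iff $|e\cup e'|=3$. More generally, for any set $S$ of edges of $C$, the common intersection $\bigcap_{e\in S}F_e$ is $V\setminus \bigcup_{e\in S} e$.

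\emph{Part (1).} Write $C=v_1\ldots v_\ell$ and $e_i=\{v_i,v_{i+1}\}$. Consecutive faces $F_{e_i}, F_{e_{i+1}}$ are down neighbors. For two faces $F_{e_i}$ and $F_{e_j}$, walk along the shorter arc of $C$ from $e_i$ to $e_j$; this gives the sequence $F_{e_i},F_{e_{i+1}},\ldots,F_{e_j}$.
\begin{itemize}
\item If $\ell\ge 4$ and the arc misses at least one edge, two non-consecutive edges of the arc are not incident. So only consecutive faces share an $(r-1)$-face, and the sequence is an $r$-path.
\item If $\ell=3$, any two faces are already down neighbors, so they form an $r$-path of length $2$.
\end{itemize}

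\emph{Part (2).} Let $C=v_1v_2v_3$ and $U=V\setminus\{v_1,v_2,v_3\}$, so $|U|=r$. The three facets are $U\cup\{v_i\}$ for $i=1,2,3$. Each contains the $(r-1)$-simplex $U$, so the complex is $\Delta_r \star 3K_1$, which is the book $\B^r_{r+3}$ on $r+3$ vertices.

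\emph{Part (3).} Let $C=v_1v_2v_3v_4$ and $U=V\setminus V(C)$, so $|U|=r-1$ and $U$ spans a copy of $\Delta_{r-1}$. The facets are $U\cup\{v_3,v_4\}$, $U\cup\{v_4,v_1\}$, $U\cup\{v_1,v_2\}$ and $U\cup\{v_2,v_3\}$. The pairs $\{v_3,v_4\},\{v_4,v_1\},\{v_1,v_2\},\{v_2,v_3\}$ are the edges of the $4$-cycle $v_1v_2v_3v_4$ (here $v_1$ is opposite $v_3$). Hence $K_C=\Delta_{r-1}\star C_4=\W^r_{r+3}$.

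\emph{Part (4).} This is the main point. Suppose $5\le \ell\le r+3$ and $K_C$ contains a wheel $\W=\Delta_{r-1}\star C_m$ with $m\ge 3$. Let $A$ be the set of $r-1$ apex vertices. The $m$ facets of $\W$ are $A\cup f$ for $f$ an edge of $C_m$, and each of them is some $F_e$.

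\begin{itemize}
\item \emph{Count of facets.} The wheel needs $m$ facets, and $r-1+m$ vertices, so $m\le 4$. We have $m\ge3$ because $C_m$ is a cycle.
\item \emph{The edges involved.} Let $S$ be the set of the $m$ edges of $C$ corresponding to these facets. Every facet contains $A$, so $A\subseteq V\setminus\bigcup_{e\in S}e$. Hence $|\bigcup_{e\in S}e|\le 4$.
\item \emph{Structure of $S$.} The facets around the wheel's cycle are consecutively down neighbors, so the corresponding edges of $C$ form a closed walk in the line graph of $C$. Consequently $S$ is closed under this cyclic adjacency with $|S|=m\ge3$.
\item \emph{Why this forces the whole cycle.} The line graph of $C$ is itself the cycle $C_\ell$, and a cyclic sequence of $m$ distinct vertices in which consecutive ones are adjacent must use the whole line-graph cycle. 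Hence $m=\ell\ge5$, contradicting $m\le 4$.
\end{itemize}

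If the wheel's cycle has length $m=3$, consecutive facets must still be pairwise down neighbors. That would require three pairwise incident edges of $C$ spanning at most $4$ vertices, i.e. a triangle in the line graph of $C_\ell$. This is impossible for $\ell\ge5$ (for $\ell=4$ the line graph is also triangle-free, and $\ell=3$ is excluded).

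The main obstacle is to justify rigorously that a wheel subcomplex's facets really form a cycle in the down-neighbor graph. In particular one must check that they are not merely a set of $m$ faces containing $A$: every facet of the wheel has the form $A\cup f$, and two such facets share an $(r-1)$-face exactly when the edges $f$ meet. That reduction is what lets the cyclic-sequence argument above go through.
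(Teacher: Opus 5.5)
Your proof is correct. Parts (1)--(3) follow the paper. In (1) you are slightly more careful than the paper: taking the shorter arc lets you check the ``only if'' half of the definition of an $r$-path, which the paper leaves implicit.

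Part (4) is argued differently. The paper splits into $m=3$ and $m=4$ and computes the complements of the wheel's facets explicitly. For $m=3$ they are the three edges from the leftover vertex $v_4$ to $v_1,v_2,v_3$, so $C$ would have a vertex of degree $3$. For $m=4$ they are the edges of $C'$ itself, so $C$ would contain a $4$-cycle.

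You never compute the complements. You use only two facts. First, consecutive wheel facets $A\cup f_i$ and $A\cup f_{i+1}$ meet in the $r$-set $A\cup(f_i\cap f_{i+1})$; this is a one-line check, so the step you flag as the main obstacle is already settled by what you wrote. Second, $F_e,F_{e'}$ are down neighbors iff $e,e'$ are incident. Together these turn the wheel into an $m$-cycle in $L(C)\cong C_\ell$, forcing $m=\ell\ge 5$ against $m\le 4$.

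Your version is uniform in $m$, and it makes both the bound on $|\bigcup_{e\in S}e|$ and the separate $m=3$ paragraph unnecessary. It suffices because it only needs a necessary condition and $C_\ell$ has no cycles shorter than $\ell$. The paper's explicit computation costs a case split, but in return it identifies exactly which forbidden configuration ($K_{1,3}$ or $C_4$ inside $C$) a wheel would create.
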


\begin{proof}
(1)  Let $F$ and $F'$ be two distinct $r$-faces of $K_C$.
By construction, there exist edges $e,e'\in E(C)$ such that $F=F_e$, $F'=F_{e'}$.
Since $C$ is a cycle, there exists a path in $C$ with edges $e=e_0,e_1,\dots,e_m=e'$, such that $e_{i-1}$ is incident with $e_i$ for $i=1, \ldots, m$.
So, the faces $F_{e_{i-1}}$ and $F_{e_i}$ are down neighbors for $i=1,\ldots,m$, which implies that $F_{e_0}, \ldots, F_{e_m}$ forms an $r$-path connecting $F=F_{e_0}$ and $F'=F_{e_m}$.
Hence, $K_C$ is $r$-path connected.

(2) If $l=3$, then $K_C$ has exactly three $r$-faces that share a common $(r-1)$ face, which implies that $K_C$ is the  book $\B_{r+3}^r$.

(3) If $l=4$, then $K_C$ has exactly four $r$-faces that share a common $(r-2)$ face, say $E$, and the link of $E$ is the cycle $C$.
So $K_C$ is the wheel $\W_{r+3}^r$.

(4) Assume to the contrary that $K_C$ contains an $r$-dimensional wheel $W$.
Suppose that $W$ is a join of an $(r-2)$ face, say $E$, with a cycle $C'$.
Then $C'$ has $3$ or $4$ vertices.
If $C'$ has $3$ vertices, say $v_1,v_2,v_3$, then there is an additional vertex  outside $W$, say $v_4$.
By the definition of cocycle complex, $K_C$ contains the following $r$-faces:
$ F_{\{v_3,v_4\}}, F_{\{v_1,v_4\}}, F_{\{v_2,v_4\}}.$
However, $C$ cannot contain the above three edges corresponding to the faces; a contradiction.

If $C'=v_1v_2v_3v_4$, then also by definition, $K_C$ contains four $r$-faces $F_e$ for all $e \in E(C')$.
However, for the above $e$, $e \in E(C)$, and hence $E(C') \subset E(C)$, which yields a contradiction as $C$ has length at least $5$.
\end{proof}

\subsection{1-neighbor  uniformity}
Let $K$ be an $r$-dimensional complex. For an $r$-face $F$ and $u\in V(K)\backslash F$, define
\[N^\dn(F,u):=\left\{G\cup\left\{u\right\}\in S_r(K):G\in\partial F\right\}.\]
An $r$-dimensional complex $K$ is called \emph{$t$-neighbor uniform} if for any $r$-face $F$ and any $v \in V(K)\setminus F$, $|N^{\dn}(F,v)|=t$, namely, each vertex outside $F$ contributes $t$ down neighbors to the face $F$.

\begin{lemma}\label{1-uni}
Let $K_C$ be an $r$-dimensional cocylce complex associated with a cycle $C$ of length $\ell$.
Then $K_C$ is $1$-neighbor uniform.
\end{lemma}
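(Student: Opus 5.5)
We argue directly from the definitions. Take an $r$-face $F$ of $K_C$ and a vertex $u \in V \setminus F$, where $V$ is the $(r+3)$-element vertex set. Since $F = F_e = V \setminus e$ for an edge $e=\{a,b\}$ of $C$, the vertices outside $F$ are exactly $a$ and $b$. By the symmetry between the endpoints of $e$ we may take $u = a$. We will show that $|N^{\dn}(F,a)| = 1$.

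The $(r-1)$-faces $G$ in $\partial F$ are the sets $F \setminus \{x\}$ with $x \in F = V \setminus \{a,b\}$. For such a $G$,
\[
G \cup \{a\} = V \setminus \{x,b\}.
\]
By the definition of $S_r(K_C)$, this is an $r$-face of $K_C$ exactly when $\{x,b\} \in E(C)$. So $N^{\dn}(F,a)$ is in bijection with the set of edges of $C$ that contain $b$ and whose other endpoint $x$ lies in $V\setminus\{a,b\}$. Note that each such $G\cup\{a\}$ is distinct from $F$, since it contains $a$.

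Because $C$ is a cycle, $b$ has exactly two neighbours on $C$. One of them is $a$, since $e=\{a,b\}\in E(C)$, and the other is a vertex $x_0 \neq a,b$. As $x_0 \in V\setminus\{a,b\} = F$, exactly one admissible $x$ exists, namely $x=x_0$. Hence $|N^{\dn}(F,a)| = 1$, and by symmetry $|N^{\dn}(F,b)| = 1$.

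Therefore every vertex outside $F$ contributes exactly one down neighbour of $F$, so $K_C$ is $1$-neighbor uniform. The argument is routine; the only step needing care is the bookkeeping with complements, namely that adding $a$ and removing $x$ from $F_{\{a,b\}}$ gives $F_{\{x,b\}}$. Cycle length $\ell\ge 3$ ensures $x_0\ne a$; it is not necessary for $C$ to span all of $V$.
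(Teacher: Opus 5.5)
Your proof is correct and follows essentially the same route as the paper. Both arguments identify the unique down neighbor contributed by $u$ as the face $F_{e'}$, where $e'$ is the other edge of $C$ at the endpoint of $e$ different from $u$. Your explicit computation $G\cup\{a\}=V\setminus\{x,b\}$ just spells out the paper's step that $e'$ must meet $e$ in one vertex and avoid $u$.
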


\begin{proof}
Let $F$ be an $r$-face of $K_C$.
Then, $F=F_e$ for some edge $e\in E(C)$.
Let $u \in V(K_C) \setminus F_e =e$.
If $F_{e'} \in N^\dn(F_e,u)$, then $e'$ is incident to $e$, and $e'$ does not contain $u$.
Since $e$ is incident to exactly $2$ edges in $C$, one of which contains the vertex $u$, $e'$ is the unique edge of $C$ incident to $e$ that does not contain $u$,
which implies that $|N^\dn(F_e,u)|=1$.
So, $K_C$ is $1$-neighbor uniform.
\end{proof}

\begin{rmk}
In general, a  $1$-neighbor uniform complex may not be $r$-path connected.
For example, let $K$ be the complement complex of two disjoint cycles $C_1$ and $C_2$.
Then $K$ is a union of two cocycle complexes, namely, $K_{C_1}$ and $K_{C_2}$.
For any two edges $e_1 \in E(C_1)$ and $e_2 \in E(C_2)$, since there exist no paths connecting $e_1$ and $e_2$ in $C_1 \cup C_2$, the face $F_{e_1}$ cannot connect to $F_{e_2}$ by an $r$-path.
\end{rmk}

\begin{lemma}\label{eig-uni}
Let $K$ be a pure $r$-dimensional complex on $n$ vertices.
If $K$ is  $1$-neighbor uniform, then
 $\q_{r-1}(K)  =n. $
\end{lemma}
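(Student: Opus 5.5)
The plan is to pass from the up operator on $(r-1)$-faces to the down operator on $r$-faces, where the $1$-neighbor uniformity hypothesis directly controls the row sums. As noted after \eqref{Q-dn}, $Q_{r-1}^{\up}(K)=(Q_r^{\down}(K))^\top$, so the two operators have the same nonzero eigenvalues. Both matrices are nonnegative, so their spectral radii are eigenvalues, and since $K$ is pure $r$-dimensional it has at least one $r$-face, making $Q_r^{\down}(K)$ nonzero. Hence $\q_{r-1}(K)$ equals the spectral radius of $Q_r^{\down}(K)$, and it suffices to compute the latter.

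By \eqref{Q-dn}, for each $r$-face $F$ the row of $Q_r^{\down}(K)$ indexed by $F$ has diagonal entry $|F|=r+1$ and an entry $1$ for each $F'\in N^{\dn}(F)$. The key combinatorial step is that $N^{\dn}(F)$ is the disjoint union of the sets $N^{\dn}(F,u)$ over $u\in V(K)\setminus F$.
\begin{itemize}
\item Any down neighbor $F'$ of $F$ satisfies $F\cap F'=G\in\partial F$, an $(r-1)$-face, so $F'=G\cup\{u\}$ for a unique vertex $u=F'\setminus F\notin F$.
\item Conversely, every element of $N^{\dn}(F,u)$ is a down neighbor of $F$.
\item Distinct vertices $u$ give disjoint sets, since $u$ is recovered from $F'$ as $F'\setminus F$.
\end{itemize}
By $1$-neighbor uniformity each of the $n-r-1$ vertices outside $F$ contributes exactly one down neighbor, so $|N^{\dn}(F)|=n-r-1$.

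Consequently every row sum of $Q_r^{\down}(K)$ equals $(r+1)+(n-r-1)=n$. The all-ones vector $\mathbf{1}$ is therefore an eigenvector with eigenvalue $n$. For a nonnegative matrix the spectral radius lies between the minimum and maximum row sums, and here both equal $n$, so the spectral radius is exactly $n$. This gives $\q_{r-1}(K)=n$.

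The only point requiring care is the disjoint-decomposition step: identifying down neighbors via the added vertex $u$, and checking that no down neighbor is counted twice or missed. The rest is a standard Perron--Frobenius row-sum argument.
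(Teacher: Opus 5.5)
Your proposal is correct and is essentially the paper's proof: both pass to $Q_r^{\down}(K)$ and use $1$-neighbor uniformity to get $|N^{\dn}(F)|=n-r-1$ for every $r$-face $F$, so that $Q_r^{\down}(K)\mathbf{1}=n\mathbf{1}$. Your explicit disjoint decomposition of $N^{\dn}(F)$ and the min/max row-sum bound simply spell out what the paper compresses into a one-line appeal to Perron--Frobenius. One small correction: $Q_{r-1}^{\up}(K)$ and $Q_r^{\down}(K)$ are $B^{\top}B$ and $BB^{\top}$ for the matrix $B$ of $|\delta_{r-1}|$, not transposes of each other, though the shared-nonzero-spectrum fact you actually use is correct.
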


\begin{proof}
Suppose that $|N^{\dn}(F,u)| = 1$, for any  $F \in S_r(K)$ and any $u \in V(K) \backslash F$.
Then each $r$-face has exactly $(n-(r+1))$ down neighbors, one for each vertex outside it.
A direct computation shows that the all‑ones vector $\mathbf{1}$ satisfies $$Q_{r}^{\down}(K) \mathbf{1} = n \mathbf{1},$$
because for any $r$-face $F$, by \eqref{Q-dn},
$$ (Q_{r}^{\down}(K)\mathbf{1})(F) = |F|\cdot 1 + \sum_{F'\in N^\dn(F)} 1 = (r+1) + (n-(r+1)) = n. $$
Hence $n$ is the largest eigenvalue of $Q_{r}^{\down}(K)$ by Perron-Frobenius theorem, implying that $n$ is also the largest eigenvalue of $Q_{r-1}^{\up}(K)$.
The result follows.
\end{proof}

Recall that the book $\B_n^r=\Delta_r \star (n-r)K_1$, which is $1$-neighbor uniform.
By Lemma \ref{1-uni}, any cocycle complex is $1$-neighbor uniform.
So, by Lemma  \ref{eig-uni}, we immediately get the following result.

\begin{coro}\label{two-rad}
Let $\B_n^r$ be the $r$-dimensional book on $n$ verties, and let $K_C$ be an $r$-dimensional cocylce complex associated with a cycle $C$.
Then $$\q_{r-1}(\B_n^r)=n, ~ \q_{r-1}(K_C)=r+3.$$
\end{coro}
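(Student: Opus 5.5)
The plan is to apply Lemma \ref{eig-uni} twice, once for the book and once for the cocycle complex. Each time we only have to check that the complex is pure $r$-dimensional and $1$-neighbor uniform, and then count its vertices.

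\emph{The book.} Write $\B_n^r=\Delta_r\star (n-r)K_1$. Let $\Delta_r$ have vertex set $E$ with $|E|=r$, and let the isolated vertices be $u_1,\ldots,u_{n-r}$. Then the $r$-faces are exactly $E\cup\{u_i\}$, so the complex is pure of dimension $r$ on $n$ vertices. Take an $r$-face $F=E\cup\{u_i\}$ and a vertex $u_j\notin F$. A member of $N^{\dn}(F,u_j)$ has the form $G\cup\{u_j\}$ with $G$ an $(r-1)$-face of $F$. It is an $r$-face of the book only if $G\cup\{u_j\}$ contains $E$, which forces $G=E$. So $N^{\dn}(F,u_j)=\{E\cup\{u_j\}\}$, and the book is $1$-neighbor uniform. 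Lemma \ref{eig-uni} then gives $\q_{r-1}(\B_n^r)=n$.

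\emph{The cocycle complex.} By definition $K_C$ is built on a vertex set $V$ with $|V|=r+3$, and every face $F_e=V\setminus e$ has $r+1$ vertices, so $K_C$ is pure $r$-dimensional. Lemma \ref{1-uni} says $K_C$ is $1$-neighbor uniform.

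The only point to watch is the vertex count. If the cycle $C$ does not use all of $V$, a vertex $w\in V\setminus V(C)$ lies in every face $F_e$, so it is still a vertex of $K_C$. Hence $K_C$ has exactly $r+3$ vertices. Lemma \ref{eig-uni} applied with $n=r+3$ gives $\q_{r-1}(K_C)=r+3$.

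Neither step is a real obstacle. The work is just checking the hypotheses of Lemma \ref{eig-uni}. The one place that needs care is the vertex count for $K_C$, so that the $n$ in Lemma \ref{eig-uni} is indeed $r+3$.
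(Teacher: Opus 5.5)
Your proof is correct and follows the paper's route: both values come from Lemma \ref{eig-uni}, with $1$-neighbor uniformity of $K_C$ supplied by Lemma \ref{1-uni} and that of $\B_n^r$ verified directly. Your explicit check for the book (forcing $G=E$) and your check that all $r+3$ vertices of $V$ really occur in $K_C$ fill in details the paper leaves implicit.
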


\subsection{Proof of main result}
\begin{lemma}\label{downeibor}
Let $K$ be an $r$-dimensional complex without $\WW^r$.
Then for any $r$-face $F$ of $K$ and any $u \in V(K) \backslash F$,
$|N^{\dn}(F,u)|\le 1$.
\end{lemma}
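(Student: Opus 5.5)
Argue by contradiction: assume some $r$-face $F$ and some vertex $u \notin F$ have $|N^{\dn}(F,u)| \ge 2$, and exhibit an $r$-dimensional wheel inside $K$, namely a copy of the cocycle complex on $r+3$ vertices associated with a $4$-cycle. By Lemma \ref{cycle-com}(3), that complex is $\W_{r+3}^r$.

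Pick two distinct faces $G_1\cup\{u\}$ and $G_2\cup\{u\}$ in $N^{\dn}(F,u)$, where $G_1,G_2\in\partial F$ are distinct $(r-1)$-faces of $F$. Write $G_1=F\setminus\{a\}$ and $G_2=F\setminus\{b\}$ with $a\neq b$ in $F$. Set $E=F\setminus\{a,b\}$; it has $r-1$ vertices, so it is an $(r-2)$-face. Then the three $r$-faces
$$F=E\cup\{a,b\},\quad F_1=E\cup\{b,u\},\quad F_2=E\cup\{a,u\}$$
all lie in $K$ and all contain $E$. In the link of $E$ they give the edges $ab$, $bu$, $au$, which form a triangle $C=a\,b\,u$. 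Since $K$ is a simplicial complex, $E\star C$ (together with all its faces) is a subcomplex of $K$. This is $\Delta_{r-1}\star C_3=\W_{r+2}^r$, a wheel on $r+2$ vertices.

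The only point to verify is that a triangle is an admissible cycle in the definition of $\WW^r$. The definition $\W_n^r=\Delta_{r-1}\star C_{n-r+1}$ allows $n-r+1=3$, i.e. $n=r+2$. The proof of Lemma \ref{cycle-com}(4) also treats a cycle $C'$ with $3$ vertices as giving a wheel, so the paper evidently counts $C_3$ as a cycle. Hence $K$ contains a member of $\WW^r$, contradicting the hypothesis, and therefore $|N^{\dn}(F,u)|\le1$.

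The main thing to be careful about is this convention on the cycle length. Everything else is immediate from closure under subsets. One should also check that the vertices are distinct: $u\notin F$ and $a\neq b$, so $E,a,b,u$ give $r+2$ distinct vertices.
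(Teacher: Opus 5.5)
Your argument is correct and is essentially the paper's proof: the faces $F$, $(F\setminus\{a\})\cup\{u\}$ and $(F\setminus\{b\})\cup\{u\}$ give the wheel $(F\setminus\{a,b\})\star C_3$ on the triangle $abu$, and the paper also counts the triangle as a cycle in its definition of $\WW^r$. Only your opening sentence is inaccurate: the argument you carry out produces $\W_{r+2}^r$ directly rather than a cocycle complex of a $4$-cycle, so Lemma \ref{cycle-com}(3) is not needed.
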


\begin{proof}
Assume to the contrary that there exist an $r$-face $F=\{v_0,\dots,v_r\}$ and a vertex $u \in V(K) \backslash F$ such that  $|N^{\dn}(F,u)|\geq 2$.
Then, $F$ contains at least two vertices, say $v_i$ and $v_j$, such that  $F_i:=(F\setminus\{v_i\}) \cup \{u\}$ and $F_j:=(F\setminus\{v_j\})\cup \{u\}$ are both $r$-faces of $K$.
Thus, $F,F_i,F_j$ form an $r$-dimensional wheel, which is a join of the $(r-2)$-face $F\setminus \{v_i, v_j\}$ with the cycle $v_i v_j u$; a contradiction.
\end{proof}

\begin{thm}\label{main_1}
Let $K$ be a pure $r$-dimensional complex on $n$ vertices without $\WW^r$.
Then
\begin{equation}\label{upp}
\q_{r-1}(K) \le n.
\end{equation}
Furthermore, if $K$ is $r$-path connected, then the equality holds if and only if $K$ is $1$-neighbor uniform.
\end{thm}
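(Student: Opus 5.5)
We work with $Q_r^{\down}(K)$ rather than $Q_{r-1}^{\up}(K)$: the two share their nonzero eigenvalues, so $\q_{r-1}(K)$ is the spectral radius of $Q_r^{\down}(K)$. This is a nonnegative symmetric matrix indexed by $S_r(K)$, and we bound it through its row sums.

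\textbf{Step 1: the upper bound.} Fix an $r$-face $F$. Every down neighbor of $F$ has the form $G\cup\{u\}$ with $G\in\p F$ and $u\in V(K)\setminus F$. Hence the down neighbors of $F$ are partitioned as
\[
N^{\dn}(F)=\bigcup_{u\in V(K)\setminus F} N^{\dn}(F,u),
\]
and this union is disjoint. By Lemma \ref{downeibor}, each part has size at most $1$, so $|N^{\dn}(F)|\le n-r-1$. By \eqref{Q-dn}, the row sum of $Q_r^{\down}(K)$ at $F$ is $|F|+|N^{\dn}(F)|\le (r+1)+(n-r-1)=n$. Since the spectral radius of a nonnegative matrix is at most its maximum row sum, we obtain \eqref{upp}.

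\textbf{Step 2: sufficiency of $1$-neighbor uniformity.} If $K$ is $1$-neighbor uniform, then $\q_{r-1}(K)=n$ directly by Lemma \ref{eig-uni}.

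\textbf{Step 3: necessity under $r$-path connectivity.} Assume $K$ is $r$-path connected. Then the off-diagonal pattern of $Q_r^{\down}(K)$ is exactly the down-adjacency graph on $S_r(K)$, and this graph is connected. So $Q_r^{\down}(K)$ is irreducible, and Perron--Frobenius gives a positive eigenvector $x$ for $\rho:=\q_{r-1}(K)$. Let $R_F$ be the row sum at $F$. Then
\[
\rho\sum_F x_F=\mathbf 1^\top Q_r^{\down}(K) x=\sum_F R_F x_F,
\]
where the second equality uses symmetry. Now suppose $\rho=n$. Since $R_F\le n$ for every $F$ and $x>0$, this forces $R_F=n$ for all $F$. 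That means $|N^{\dn}(F)|=n-r-1$ for every $F$. By the disjoint partition in Step 1, together with the per-vertex bound of $1$, we get $|N^{\dn}(F,u)|=1$ for every $u\notin F$. This is exactly $1$-neighbor uniformity.

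The main point needing care is the equality case: it requires irreducibility, and that is exactly what $r$-path connectivity supplies. Without it, a disconnected component could attain $n$ while another component is deficient, as the Remark after Lemma \ref{1-uni} illustrates for the analogous situation. The only other check is that the decomposition of $N^{\dn}(F)$ by the outside vertex $u$ is genuinely disjoint. This holds because each down neighbor has exactly one vertex outside $F$, so $u$ is determined by the neighbor.
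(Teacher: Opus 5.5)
Your proof is correct, but the equality case takes a different route from the paper's. Step 1 is the paper's argument in substance. The paper normalizes a nonnegative Perron vector $f$ of $Q_r^{\down}(K)$ to have maximum entry $1$ at a face $F_0$ and reads off the eigen-equation at $F_0$, which is exactly the standard proof of the row-sum bound you quote.

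In the equality case the paper argues locally. Equality at $F_0$ forces $|N^{\dn}(F_0)|=n-r-1$ and $f(F)=1$ for every down neighbor $F$ of $F_0$. So each such $F$ is again a maximizer, the same reasoning applies to it, and propagating along $r$-paths gives $f\equiv 1$ and saturation of every face.

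You instead argue globally. $r$-path connectivity gives irreducibility, hence a strictly positive Perron vector $x$. The identity $\rho\,\mathbf{1}^\top x=\sum_F R_F x_F$, which uses symmetry, then turns $\rho=n$ and $R_F\le n$ into $R_F=n$ for all $F$ in one line.

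What each buys:
\begin{itemize}
\item Your version makes explicit that connectivity enters only through irreducibility. It also replaces the paper's informal ``repeating the above discussion'' with a clean averaging step.
\item The price is that you use the symmetry of $Q_r^{\down}(K)$ and the positivity part of Perron--Frobenius. The max-propagation argument needs neither: it works for any irreducible nonnegative matrix, starting from an arbitrary nonnegative eigenvector.
\end{itemize}

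One small inaccuracy in your closing aside, which does not affect the proof. The Remark after Lemma \ref{1-uni} exhibits a $1$-neighbor uniform complex that is not $r$-path connected; it is not an example of a deficient component. An example showing that connectivity is genuinely needed for the equality characterization is $\B_n^r$ together with one extra $r$-face meeting the $r$-vertex spine in at most $r-2$ vertices (for $n\ge r+3$). This complex is wheel-free and has $\q_{r-1}=n$, but it is not $1$-neighbor uniform.
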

	
\begin{proof}
Observe that $\q_{r-1}(K)$ is also the spectral radius of $Q_{r}^{\down}(K)$.
Let $f$ be the Perron vector of $Q_{r}^{\down}(K)$ associated with the eigenvalue $\q_{r-1}(K)$.
By normalization, let $F_0$ be the $r$-face of $K$ such that
$$ f(F_0):= \max \{f(F): F \in S_r(K)\} =1.$$
Since $K$ is $\WW^r$-free, by Lemma \ref{downeibor}, for any $u \in V(K) \backslash F_0$, we have
\begin{equation}\label{localu}
|N^{\dn}(F_0,u)|\le 1,
\end{equation}
and hence
\begin{equation}\label{local}
|N^\dn(F_0)| =\sum_{u \in V(K) \backslash F_0} |N^{\dn}(F_0,u)|\le n-r-1.
\end{equation}
By \eqref{Q-dn}, we get
\begin{equation}\label{eq-r}
\begin{split}
\q_{r-1}(K) & = \q_{r-1}(K) f(F_0) =|F_0| f(F_0) + \sum_{F' \in N^\dn(F_0)} f(F')\\
&\le  r+1 + \sum_{F' \in N^\dn (F_0)} 1 \\
&\le r+1+n-r-1=n.
\end{split}
\end{equation}

Now suppose that $K$ is $r$-path connected.
If $\q_{r-1}(K) = n$, then by \eqref{eq-r},
$|N^\dn(F_0)| =  n-r-1$,  and  $f(F)=1$ for each $F \in N^{\dn}(F_0)$.
By \eqref{localu}, the equality $|N^\dn(F_0)| =  n-r-1$ implies that  $|N^{\dn}(F_0,u)| = 1$ for all $u \in V(K) \backslash F_0$.
Applying the above discussion on any down neighbor $F \in N^{\dn}(F_0)$, we will get $f(F')=1$  for all $F' \in N^{\dn}(F)$.
Repeating the above discussion,  as $K$ is $r$-path connected, we will have $f(F)=1$ for all $F \in S_r(K)$.
Consequently, by \eqref{localu}, for any  $F \in S_r(K)$ and any $u \in V(K) \backslash F$, we have $|N^{\dn}(F,u)| = 1$.
So, $K$ is $1$-neighbor uniform.
Conversely, if $K$ is $1$-neighbor uniform, then, by Lemma \ref{eig-uni}, $\q_{r-1}(K)=n$.
\end{proof}

\begin{lemma}\label{r+3}
Let $K$ be an $r$-path connected $r$-dimensional complex on $r+3$ vertices without $\WW^r$.
If $\q_{r-1} (K) =  r+3$ , then $K \cong \B_{r+3}^{r}$, or $K$ is an $r$-dimensional cocycle complex associated with a cycle of length at least $5$.
\end{lemma}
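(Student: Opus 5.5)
By Theorem \ref{main_1}, since $K$ is $r$-path connected, $\WW^r$-free and $\q_{r-1}(K)=r+3=n$, the complex $K$ is $1$-neighbor uniform. The plan is to pass to complements. Put $V=V(K)$ with $|V|=r+3$. Each $r$-face $F$ has $r+1$ vertices, so $F=V\setminus e$ for a unique $2$-subset $e$ of $V$. Let $G$ be the graph on $V$ whose edge set is $\{e: V\setminus e\in S_r(K)\}$. Then $K$ is exactly the complex whose $r$-faces are the complements of the edges of $G$. Two $r$-faces $F_e,F_{e'}$ are down neighbors if and only if $|F_e\cap F_{e'}|=r$, which happens if and only if $|e\cup e'|=3$, i.e.\ $e$ and $e'$ are incident edges of $G$.

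Next we translate $1$-neighbor uniformity into a statement about $G$. Fix an edge $e=\{u,w\}$ of $G$. The vertices outside $F_e$ are $u$ and $w$. A down neighbor of $F_e$ that contains $u$ has the form $F_{e'}$, where $e'$ is incident to $e$ and $u\notin e'$. Hence $e'=\{w,x\}$ with $x\ne u$. So $|N^{\dn}(F_e,u)|=1$ says that $w$ has exactly one $G$-neighbor other than $u$, that is, $\deg_G(w)=2$. Symmetrically, $\deg_G(u)=2$. Thus every vertex of $G$ that is incident to an edge has degree exactly $2$. Since $K$ is pure and nonempty, $G$ has at least one edge, so the non-isolated part of $G$ is a disjoint union of cycles.

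Now we use $r$-path connectivity. An $r$-path in $K$ corresponds to a walk through successively incident edges of $G$. So $r$-path connectivity of $K$ forces all edges of $G$ to lie in one connected component, and therefore $G$ is a single cycle $C$ (plus isolated vertices). Hence $K=K_C$ is a cocycle complex. By Lemma \ref{cycle-com}(3), a cycle of length $4$ gives $K\cong\W^r_{r+3}$, which contradicts $\WW^r$-freeness. A cycle of length $3$ gives $K\cong\B^r_{r+3}$ by Lemma \ref{cycle-com}(2). The remaining cycle lengths are at least $5$, and these give cocycle complexes of the required kind; they are genuinely $\WW^r$-free by Lemma \ref{cycle-com}(4).

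The only delicate step is the degree computation in the second paragraph, because the local count of down neighbors has to be matched correctly with degrees in $G$. The rest of the argument is bookkeeping.
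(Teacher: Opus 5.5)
Your proof is correct, and it takes a genuinely different route from the paper's. Both arguments begin the same way: they invoke Theorem \ref{main_1} to get $1$-neighbor uniformity, and they encode each $r$-face by its complementary $2$-set $e=V\setminus F$.

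The paper then uses $1$-neighbor uniformity only locally. Starting from $F_0=F_{\{r+1,r+2\}}$, it splits into two cases:
\begin{itemize}
\item both outside vertices attach to the same $(r-1)$-face of $F_0$, which gives a book;
\item they attach to different $(r-1)$-faces.
\end{itemize}
In the second case it grows two paths $r+2,i_0,i_2,\dots$ and $r+1,i_1,i_3,\dots$ in the complement graph. At each step it follows the unique down neighbor at the current end, alternating between the two paths. Premature closure into a $3$- or $4$-cycle is ruled out by wheel-freeness, or by strict monotonicity of $\q_{r-1}$ when an $r$-face is added to an $r$-path connected complex. The paper uses this Perron--Frobenius fact repeatedly without stating it, and the final identification $K=K_C$ again relies on it.

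You instead turn $1$-neighbor uniformity into a global statement in one line, $|N^{\dn}(F_e,u)|=\deg_G(w)-1$. So the complement graph is $2$-regular on its non-isolated vertices, and $r$-path connectivity of $K$ is exactly connectivity of the edge set of $G$. This removes the case split, the iterative ``repeat the discussion'' step, and every appeal to strict spectral monotonicity. Wheel-freeness enters only through Theorem \ref{main_1} and to exclude the $4$-cycle. Your approach also gives a clean structural fact: on $r+3$ vertices, the $1$-neighbor uniform pure complexes are exactly complements of disjoint unions of cycles, which explains the paper's remark about two disjoint cycles.

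What the paper's approach buys is mainly stylistic continuity with its proof of Theorem \ref{main}, where the same ``proper subcomplex forces a larger spectral radius'' reasoning reappears. Your argument is shorter and fully rigorous as written.
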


\begin{proof}
For any $r$-face $F$ of $K$, it uniquely  corresponds to a $2$-set $e$, namely $e=V(K)\setminus F$.
We denote $F_e:=V(K) \setminus e$.
Suppose that $V(K)=\{0, 1, \ldots, r+2\}$, and
without loss of generality, suppose that $F_0=F_{\{{r+1},{r+2}\}}$ is an $r$-face of $K$.
By Theorem \ref{main_1}, $K$ is $1$-neighbor uniform, namely, for any $F \in S_r(K)$ and any $u \in V(K) \backslash F$, 	$|N^{\dn}(F,u)| = 1$.

{\bf Case 1.} There exists a fixed $(r-1)$-face $E \in \p F_0$ such that the unique face in $N^{\dn}(F_0,u)$ is $E \cup \{u\}$ for each  $u \in \{ {r+1}, {r+2}\}$.
In this case, we have a book
$$\B_{r+3}^{r}=E \star \{ F_0 \setminus E, \{{r+1}\}, \{{r+2}\}\} .$$
Since $K$ is $r$-path connected and $\B_{r+3}^{r}$ is a subcomplex of $K$,
we have $K=\B_{r+3}^{r}$; otherwise, by Corollary \ref{two-rad},
\begin{equation}\label{Cont-book}
\q_{r-1}(K) > \q_{r-1}(\B_{r+3}^{r})= r+3,
\end{equation}
which yields a contradiction.

{\bf Case 2.} There exist two different $(r-1)$-faces $E_0,E_1  \in \p F_0$ such that $E_0 \cup \{{r+1}\}$ and $E_1 \cup \{{r+2}\}$ are $r$-faces of $K$.
Equivalently, there exist two different vertices ${i_0}$ and ${i_1}$ of $F_0$ such that
$F_{\{{i_0}, {r+2}\}}, F_{\{{i_1}, {r+1}\}} \in S_r(K)$, which correspond to the  edges $\{{i_0}, {r+2}\}, \{{i_1}, {r+1}\}$ in Fig. \ref{pf}, respectively.
There is also an edge $\{r+1,r+2\}$ in Fig. \ref{pf} corresponding to the face $F_0$.

Since $|N^{\dn}(F_{\{{i_0}, {r+2}\}}, {r+2})|=1$, there exists some vertex ${i_2} \in F_{\{{i_0}, {r+2}\}}$ such that $F_{\{{i_0}, {i_2}\}} \in N^{\dn}(F_{\{{i_0}, {r+2}\}}, {r+2})$.
Note that $F_e$ and $F_{e'}$ are down neighbors if and only if $|e \cap e'|=1$.
So, in other words, the unique down neighbor in $N^{\dn}(F_{\{{i_0}, {r+2}\}}, {r+2})$ is a face $F_e$, where $e$ intersects $\{{i_0}, {r+2}\}$ in exactly one element and contains no ${r+2}$, implying $e=\{{i_0},{i_2}\}$; see Fig. \ref{pf} for an illustration.
Surely, ${i_2} \notin \{{i_0}, {r+2}\}$.

\begin{figure}[htbp]
\centering
\includegraphics[scale=.8]{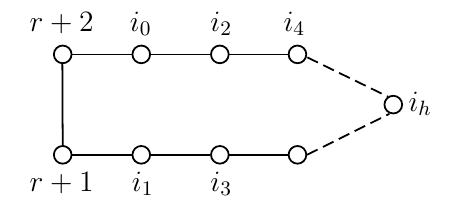}
\caption{\small An illustration of Case 2 in Proof of Lemma \ref{r+3}}\label{pf}
\end{figure}

We assert ${i_2} \notin \{{i_1}, {r+1}\}$.
Otherwise, if ${i_2} = {i_1}$, then $K$ contains a cocycle complex $K_C$ associated with a cycle $C={r+2},{i_0}, {i_1},{r+1}$; see Fig. \ref{pf}.
By Lemma \ref{cycle-com}(3), $K_C$ is the wheel $\W_{r+3}^r$, contradicting the assumption on $K$.
If ${i_2} = {r+1}$, then $K$ also contains a cocycle complex $K_C$ associated with a cycle $C={r+2},{i_0}, {r+1}$, which is a book $\B_{r+3}^r$ by Lemma \ref{cycle-com}(2).
Observe that $K_C$ is a proper subcomplex of $K$, since there is an additional face outside $\B_{r+3}^r$ that corresponds to the edge $\{{i_1},{r+1}\}$.
Then, we would get a contradiction by \eqref{Cont-book}.
By the above discussion, we have ${i_2} \notin \{{r+1},{r+2}, {i_0}, {i_1}\}$.

Next, we turn to the down neighbors of the face corresponding to the bottom edge $\{i_1,r+1\}$ in Fig. \ref{pf}.
Since $|N^{\dn}(F_{\{{i_1}, {r+1}\}}, {r+1})|=1$,
there is a unique $r$-face $F_e$ in $N^{\dn}(F_{\{{i_1}, {r+1}\}}, {r+1})$,
where $e$ intersects $\{{i_1},{r+1}\}$ exactly into  one element and does not contain $v_{r+1}$.
So, we can write $e=\{{i_1}, {i_3}\}$, where ${i_3} \notin \{{i_1}, {r+1}\}$.
We assert ${i_3} \notin \{{i_0}, {r+2}\}$.
Otherwise, by a similar discussion to the preceding, $K$ would contain a cocycle complex associated with a cycle of $3$ or $4$ vertices, contradicting the assumption on $K$.
So, we have ${i_3} \notin \{{r+1},{r+2}, {i_0}, {i_1}\}$.

If ${i_3}={i_2}$, then $K$ contains a cocycle complex $K_C$ associated with a cycle $C$ on the vertices ${r+2},{i_0}, {i_2}, {i_1}, {r+1}$.
So, $K=K_C$, as, otherwise,
\begin{equation}\label{Cont-cyc} \q_{r-1}(K) > \q_{r-1}(K_C) =r+3.
\end{equation}
If ${i_3} \ne {i_2}$, we have ${i_3} \notin \{{r+2},{i_0}, {i_2}, {i_1}, {r+1}\}$, and continue the discussion by turning to the down neighbors of the face corresponding to the top edge $\{{i_0},{i_2}\}$ in Fig. \ref{pf}.
Consider the unique down neighbor in $N^{\dn}(F_{\{{i_0},{i_2}\}}, {i_0})$, which has the form $F_{\{{i_2},{i_4}\}}$, where ${i_4} \notin \{{i_0}, {i_2}\}$.
By a similar discussion, ${i_4} \notin \{{r+2},{r+1}, {i_1}\}$;
otherwise, $K$ would contain a cocycle complex associated with a cycle on $3$ vertices (if $v_{i_4}=v_{r+2})$ or $4$ vertices (if $v_{i_4}=v_{r+1})$ or $5$ vertices (if $v_{i_4}=v_{i_1})$.
Hence, $K$ contains a book $\B_{r+3}^r$ or a wheel $\W_{r+3}^r$ or a cocycle complex $K_C$ associated with a cycle $C$ of length $5$ as a proper subcomplex, producing a contradiction, as $K$ is $\WW^r$-free and
$$ \q_{r-1}(K) > \q_{r-1}(\B_{r+3}^r)=\q_{r-1}(K_C) = r+3.$$

If ${i_4}={i_3}$, then $K$ contains a cocycle complex $K_C$ associated with a cycle $C$ on the vertices ${r+2},{i_0}, {i_2}, {i_3}, {i_1}, {r+1}$.
So, $K=K_C$ by a discussion similar to \eqref{Cont-cyc}.
If ${i_4} \ne {i_3}$, we have ${i_4} \notin \{{r+2},{i_0}, {i_2}, {i_3}, {i_1}, {r+1}\}$, and continue the discussion by turning to the down neighbors of the face corresponding to the bottom edge $\{i_1,i_3\}$ in Fig. \ref{pf}.

We repeat the above discussion by alternatively considering the down neighbors of the faces corresponding to the top edges and those corresponding to the bottom edges in  Fig. \ref{pf}.
Since $K$ has a finite number of vertices, we finally arrive at a cocycle complex $K_C$ associated with a cycle ${r+2},{r+1}, {i_0}, \ldots,{i_h}$, and $K$ is exactly the cocycle complex $K_C$, where $ 3 \le h \le r+1$.
\end{proof}
	
\begin{proof}[\bf Proof of Theorem \ref{main}]
By Theorem \ref{main_1}, we have $\q_{r-1}(K) \le n$, which implies \eqref{upp-main} in
Theorem \ref{main}.
Surely, if $K \cong \B_n^r$, by Corollary \ref{two-rad}, $\q_{r-1}(K)=n$.
Now suppose $K$ is $r$-path connected with $\q_{r-1}(K)=n$.
By Theorem \ref{main_1},  $K$ is $1$-neighbor uniform, that is, for any  $F \in S_r(K)$ and $u \in V(K) \setminus F$, $|N^{\dn}(F,u)| = 1$.

Suppose that $V(K) = \{v_0 , \dots, v_{n-1}\}$ and $F_0=\{v_0,\dots,v_r\} \in S_r(K)$.
Then for any $u \in V(K) \setminus F_0$, $|N^{\dn}(F_0,u)| = 1$.
By the pigeonhole principle, there exists an $(r-1)$-face $E_0 \in \p F_0$ such that $E_0$ is contained in at least  $\lceil (n-r-1)/(r+1) \rceil$ faces except the face $F_0$, or equivalently, there exist at least  $\lceil (n-r-1)/(r+1) \rceil$ vertices $v$ outside $F_0$ such that $E_0 \cup \{v\} \in S_r(K)$.
Denote $N(E)= \{ v \in V(K)\setminus F_0: E \cup \{v\} \in S_r(K) \}$ for an $(r-1)$-face $E \in \p F_0$.

{\bf Case 1.} For any face $E \in \p F_0 $ with $E \ne E_0$, $N(E) = \emptyset$.
This implies that $E$ is only contained in the face $F_0$, $E_0 \cup \{v\} \in S_r(K)$ for all vertices $v$ outside $F_0$.
In this case, we have a book $\B_{n}^{r}$ which contains the face $F_0$.
Since $K$ is $r$-path connected and $\B_{n}^{r}$ is a subcomplex of $K$,
we have $K=\B_{n}^{r}$; otherwise, $\q_{r-1}(K) > \q_{r-1}(\B_{n}^{r})= n$; a contradiction.

{\bf Case 2.} There exists a face $E \in \p F_0$ such that $E \ne E_0$ and $|N(E)| \geq 1$.
Suppose that $E \cup \{u\} \in S_r(K)$, where $u \notin F_0 \cup N(E_0)$.
Choose a vertex $v \in N(E_0)$, and let $F_0[uv]=\{u\} \cup \{v\} \cup F_0$, an $(r+3)$-subset of $V(K)$.
Let $K[F_0[uv]]$ be the subcomplex of $K$ induced by the $r$-faces contained in $F_0[uv]$, which contains the face $F_0$, $E_0[v]:=E_0 \cup \{v\}$ and $E[u]:=E \cup \{u\}$.

Note that the complex induced by the faces $F_0$, $E_0[v]$ and $E[v]$ is $r$-path connected with vertex set $F_0[uv]$.
Let $K'[F_0[uv]]$ be an $r$-path connected component of $K[F_0[uv]]$ that contains the faces $F_0$, $E_0[v]$ and $E[v]$.
It is known that $K'[F_0[uv]]$ is $1$-neighbor uniform as each of its faces is connected to its down neighbors in $K$.
So, by Lemma \ref{eig-uni}, we have $\q_{r-1}(K'[F_0[uv]])=r+3$.
Note that $K'[F_0[uv]]$ is $\WW^r$-free and is not the book $\B_{r+3}^r$.
By Lemma \ref{r+3}, $K'[F_0[uv]]$ is a cocycle complex $K_C$ associated with a cycle $C$ of length $\ell$, where $C$ contains the edge $\{u,v\}$ (corresponding to the face $F_0$), and $5\leq \ell\leq r+3$.

By the above discussion, for each $v \in N(E_0)$, $K'[F_0[uv]]$ is a cocycle complex $K_C$ associated with a cycle $C$ of length $\ell$ with $5\leq \ell\leq r+3$.
By the pigeonhole principle, there exists a subset $N' \subseteq N(E_0)$ with size
at least $\lceil \lceil (n-r-1)/(r+1) \rceil/(r-1) \rceil $
such that the complexes $K'[F_0[uv]]$ for all vertices $v \in N'$ are cocycle complexes associated with cycles of same length $c$, where $5 \le c \le r+3$.

For a vertex $v \in N'$, write $K'[F_0[uv]]=K_C$, where $C$ contains the edge $\{v,u\}$ and the other $(c-2)$ vertices from the $(r+1)$-set $F_0$.
So, there are $ \binom{r+1}{c-2}(c-2)!$ possible configurations for the cycle $C$.
If $n$ is sufficiently large, we have
$$ |N'| \ge \left\lceil \frac{\lceil (n-r-1)/(r+1) \rceil}{r-1}\right\rceil >  \binom{r+1}{c-2}(c-2)!.$$
This implies that for sufficiently large $n$, there exist two distinct vertices $v_1,v_2 \in N'$ such that cocycle complexes $K[F_0[uv_1]]=K_{C_1}$ and $K[F_0[uv_2]]=K_{C_2}$ satisfy:
$$ C_1=v_1 u w_1 \cdots w_{c-2}, ~ C_2=v_2 u w_1 \cdots w_{c-2},$$
where $w_1 \cdots w_{c-2} \in F_0$.

In the complex $K_{C_1}$, we have faces $G_1:=(F_0 \setminus \{w_1\}) \cup \{v_1\}$ (corresponding to the edge $\{u,w_1\}$ of $C_1$), $G_2:=(F_0 \setminus \{w_1,w_2\}) \cup \{v_1,u\}$  (corresponding to the edge $\{w_1,w_2\}$  of $C_1$).
While, in the complex $K_{C_2}$, we have faces $G_3:=(F_0 \setminus \{w_1\}) \cup \{v_2\}$ (corresponding to the edge $\{u,w_1\}$ of $C_2$), $G_4:=(F_0 \setminus \{w_1,w_2\}) \cup \{v_2,u\}$  (corresponding to the edge $\{w_1,w_2\}$ of $C_2$).
Then, $K$ contains the faces $G_1,G_2,G_3,G_4$, all sharing a common $(r-2)$-face $S=:F_0 \setminus \{w_1,w_2\}$.
Therefore, the link of $S$ contains a cycle $w_2v_1uv_2$, which implies that $K$ contains a wheel; a contradiction.
\end{proof}

\end{document}